\providecommand{\U}[1]{\protect\rule{.1in}{.1in}}
\providecommand{\U}[1]{\protect\rule{.1in}{.1in}}
\providecommand{\U}[1]{\protect\rule{.1in}{.1in}}
\newtheorem{theorem}{Theorem}[section]
\newtheorem{lemma}[theorem]{Lemma}
\theoremstyle{definition}
\begin{document}
\title[A note on the Bohnenblust--Hille inequality for multilinear forms]{A
note on the Bohnenblust--Hille inequality for multilinear forms}
\author[J. Santos]{J. Santos}
\address{Departamento de Matem\'{a}tica \\
Universidade Federal da Para\'{\i}ba \\
58.051-900 - Jo\~{a}o Pessoa, Brazil.}
\email{joedsonmat@gmail.com}
\author[T. Velanga]{T. Velanga}
\address{IMECC \\
UNICAMP-Universidade Estadual de Campinas \\
13.083-859 - S\~{a}o Paulo, Brazil.}
\address{Departamento de Matem\'{a}tica \\
Universidade Federal de Rond\^onia \\
76.801-059 - Porto Velho, Brazil.}
\email{thiagovelanga@gmail.com}
\keywords{Bohnenblust--Hille inequality}
\thanks{2010 Mathematics Subject Classification: Primary 47A63; Secondary
47A07}
\thanks{J. Santos was supported by CNPq, grant 303122/2015-3}

\begin{abstract}
The general versions of the Bohnenblust--Hille inequality for $m$-linear
forms are valid for exponents $q_{1},...,q_{m}\in \lbrack 1,2].$ In this
paper we show that a slightly different characterization is valid for $%
q_{1},...,q_{m}\in (0,\infty ).$
\end{abstract}

\maketitle



\section{Introduction}

The Bohnenblust-Hille inequality \cite{bh} asserts that for all positive
integers $m\geq 1$ there is a constant $C=C(\mathbb{K}$,$m)\geq 1$ such that
\begin{equation}
\left( \sum\limits_{i_{1},...,i_{m}=1}^{\infty }\left\vert
T(e_{i_{^{1}}},...,e_{i_{m}})\right\vert ^{\frac{2m}{m+1}}\right) ^{\frac{m+1%
}{2m}}\leq C\left\Vert T\right\Vert  \label{juui}
\end{equation}%
for all continuous $m$-linear forms $T:c_{0}\times \cdots \times
c_{0}\rightarrow \mathbb{K}$, where $\mathbb{K}$ denotes the fields of real
or complex scalars. This result can be generalized in some different
directions. A very interesting and far reaching generalization is the following (below, $%
e_{_{j}}^{n_{j}}$ means $(e_{j},...,e_{j})$ repeated $n_{j}$ times):

\bigskip

\begin{theorem}[Albuquerque, Ara\'{u}jo, Nu\~{n}ez, Pellegrino and Rueda]
(\cite{aa})\label{809} Let $1\leq k\leq m$ and $n_{1},\ldots ,n_{k}\geq 1$
be positive integers such that $n_{1}+\cdots +n_{k}=m$, let $q_{1},\dots
,q_{k}\in \lbrack 1,2]$. The following assertions are equivalent :

(I) There is a constant $C_{k,q_{1}...q_{k}}^{\mathbb{K}}\geq 1$ such that%
{\small {\
\begin{equation*}
\left( {\sum\limits_{i_{1}=1}^{\infty }}\left( {\sum\limits_{i_{2}=1}^{%
\infty }}\left( ...\left( {\sum\limits_{i_{k-1}=1}^{\infty }}\left( {%
\sum\limits_{i_{k}=1}^{\infty }}\left\vert T\left( e_{i_{1}}^{n_{1}},\ldots
,e_{i_{k}}^{n_{k}}\right) \right\vert ^{q_{k}}\right) ^{\frac{q_{k-1}}{q_{k}}%
}\right) ^{\frac{q_{k-2}}{q_{k-1}}}\cdots \right) ^{\frac{q_{2}}{q_{3}}%
}\right) ^{\frac{q_{1}}{q_{2}}}\right) ^{\frac{1}{q_{1}}}\leq
C_{k,q_{1}...q_{k}}^{\mathbb{K}}\left\Vert T\right\Vert
\end{equation*}%
}}for all continuous $m$-linear forms $T:c_{0}\times \cdots \times
c_{0}\rightarrow \mathbb{K}$.

(II) $\frac{1}{q_{1}}+\cdots +\frac{1}{q_{k}}\leq \frac{k+1}{2}.$
\end{theorem}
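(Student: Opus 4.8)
The plan is to prove the equivalence in two independent directions: the necessity (I)$\Rightarrow$(II) via a Kahane--Salem--Zygmund lower bound, and the sufficiency (II)$\Rightarrow$(I) by establishing a single endpoint (``anchor'') estimate from Khinchin's inequality, deducing from it $k$ extremal inequalities, and then interpolating. For (I)$\Rightarrow$(II) I would fix $N$ and take a $k$-linear Kahane--Salem--Zygmund form $B_N(x^{(1)},\dots,x^{(k)})=\sum_{i_1,\dots,i_k=1}^{N}c_{i_1\dots i_k}\,x^{(1)}_{i_1}\cdots x^{(k)}_{i_k}$ with $|c_{i_1\dots i_k}|=1$ and $\|B_N\|\le\kappa_k N^{(k+1)/2}$. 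I would promote $B_N$ to a genuine $m$-linear form by coordinatewise products across each block: putting $w^{(s)}_i=\prod_{t\in\text{block }s}y^{(t)}_i$, define $T_N(y^{(1)},\dots,y^{(m)}):=B_N(w^{(1)},\dots,w^{(k)})$. Then $T_N$ is $m$-linear, $\|T_N\|\le\|B_N\|$ because $\|w^{(s)}\|_\infty\le\prod_{t\in\text{block }s}\|y^{(t)}\|_\infty$, and on the blocked diagonal the repeated entries collapse, giving $|T_N(e_{i_1}^{n_1},\dots,e_{i_k}^{n_k})|=|c_{i_1\dots i_k}|=1$. Feeding $T_N$ into (I) makes the left-hand side equal $N^{1/q_1+\cdots+1/q_k}$, so $N^{\sum 1/q_i}\le C\,\kappa_k\,N^{(k+1)/2}$ for every $N$; letting $N\to\infty$ forces $\sum 1/q_i\le(k+1)/2$.

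For (II)$\Rightarrow$(I) I would first reduce to the boundary. Since decreasing any single exponent only enlarges the nested mixed norm, and since $\sum 1/q_i\le(k+1)/2\le k$ with each $1/q_i\le 1$, one can replace $(q_1,\dots,q_k)$ by exponents $(r_1,\dots,r_k)\subset[1,2]$ with $r_i\le q_i$ and $\sum 1/r_i=(k+1)/2$; the $(r_i)$-inequality then implies the $(q_i)$-inequality. Hence it suffices to prove (I) on the hyperplane $\sum 1/q_i=(k+1)/2$. On it I single out the $k$ anchor vectors $\mathbf q^{(j)}$, $j=1,\dots,k$, with $q^{(j)}_j=1$ and $q^{(j)}_i=2$ otherwise, all admissible. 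Writing $\theta_i=2/q_i-1\in[0,1]$, a direct check gives $\sum_i\theta_i=1$ and $1/q_i=\sum_j\theta_j\,(1/q^{(j)}_i)$, so every admissible boundary vector is a convex combination of the anchors in the $1/q$ coordinates.

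The analytic core is a single estimate. By Minkowski's inequality (moving the lone $\ell_1$ outward only increases the nested norm), each anchor inequality reduces to $\sum_{i_1}\big(\sum_{i_2,\dots,i_k}|T(e_{i_1}^{n_1},\dots,e_{i_k}^{n_k})|^2\big)^{1/2}\le C\|T\|$, which I would obtain from the multilinear Khinchin inequality by inserting independent Rademacher variables across blocks $2,\dots,k$. With the $k$ anchor inequalities in hand, all in the fixed nesting order of (I) and sharing the common right-hand side $\|T\|$, I would conclude by interpolating: the generalized H\"older/log-convexity inequality for nested mixed $\ell_p$-norms converts $1/q_i=\sum_j\theta_j/q^{(j)}_i$ into $\|\cdot\|_{\mathbf q}\le\prod_j\|\cdot\|_{\mathbf q^{(j)}}^{\theta_j}\le\big(\prod_j C_j^{\theta_j}\big)\|T\|$, which is precisely (I).

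The step I expect to be the main obstacle is the anchor estimate in the \emph{blocked} setting: because each block feeds the \emph{same} vector $e_{i_s}$ into $n_s$ slots, the naive sign-averaging produces off-diagonal cross terms that must be isolated — via polarization or a careful multi-Rademacher averaging — before the bound by $\|T\|$ emerges. The Kahane--Salem--Zygmund step, the reduction to the boundary, the convex-hull identity, and the mixed-norm interpolation are, by comparison, routine once this endpoint estimate is secured.
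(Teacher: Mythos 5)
Your necessity direction is complete and correct: the coordinatewise-product device $w^{(s)}_i=\prod_{t\in \mathrm{block}\ s}y^{(t)}_i$ really does produce an $m$-linear form $T_N$ with $\Vert T_N\Vert \le \Vert B_N\Vert$ whose blocked diagonal recovers the unimodular Kahane--Salem--Zygmund coefficients, and the scaling comparison $N^{1/q_1+\cdots +1/q_k}\le C\kappa_k N^{(k+1)/2}$ then forces (II). This is the same KSZ argument the present paper runs when proving its Theorem \ref{geral}, adapted correctly to blocks. Your reduction to the boundary hyperplane, the anchor vectors, the weights $\theta_i=2/q_i-1$, and the mixed-norm interpolation are likewise sound and faithfully reproduce the scheme behind the unblocked Theorem \ref{THMBHQ}. (For the record: the paper gives no proof of Theorem \ref{809} at all --- it is quoted from \cite{aa} --- so your attempt can only be measured against that reference and against the analogous arguments the paper does carry out.)

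The genuine gap is exactly where you locate it, and the repair you sketch does not work as stated. The blocked anchor estimate $\sum_{i_j}\bigl(\sum_{i_1,\dots ,\widehat{i_j},\dots ,i_k}\vert T(e_{i_1}^{n_1},\dots ,e_{i_k}^{n_k})\vert ^2\bigr)^{1/2}\le C\Vert T\Vert$ cannot be obtained by \emph{inserting independent Rademacher variables across blocks}: feeding $\sum_i r_i e_i$ into all $n_s$ slots of block $s$ expands into every cross term $T(\dots ,e_{j_1},\dots ,e_{j_{n_s}},\dots )$, and the diagonal term $e_i^{n_s}$ carries the coefficient $r_i^{n_s}$, which is identically $1$ when $n_s$ is even --- so no real sign-averaging can isolate the blocked diagonal. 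The missing idea is a diagonal-extraction lemma: using complex Steinhaus variables, for which $\mathbb{E}\bigl[z_i^{n}\,\overline{z_j}^{\,n}\bigr]=\delta_{ij}$, one inserts into the $n_s$ slots of block $s$ the vector $z^{(s)}$ in $n_s-1$ slots and the coordinatewise product of $y^{(s)}$ with $\overline{z^{(s)}}^{\,n_s-1}$ in the remaining slot, and averages; this produces a $k$-linear form $S$ with $S(e_{i_1},\dots ,e_{i_k})=T(e_{i_1}^{n_1},\dots ,e_{i_k}^{n_k})$ and $\Vert S\Vert \le \Vert T\Vert$ (the real case follows by complexification, at the cost of an $m$-dependent constant). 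But once $S$ is available, your entire anchor-plus-interpolation scaffolding becomes redundant: (II)$\Rightarrow$(I) follows by applying the unblocked Theorem \ref{THMBHQ} directly to $S$, which is in essence how \cite{aa} reduces the blocked statement to the $k$-linear one, and how the present paper's Theorem \ref{999} leans on Theorem \ref{geral}. In short: your necessity half stands; your sufficiency half is a correct skeleton hung on an endpoint that your chosen averaging cannot deliver, and whose proper proof simultaneously makes the rest of that half unnecessary.
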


\bigskip

When $k=m$ we recover a characterization of Albuquerque et al. \cite{a}, and
finally when $q_{1}=\cdots =q_{m}=\frac{2m}{m+1}$ we recover the
Bohnenblust--Hille inequality.

\bigskip In this note we present an extension of the above theorem to $%
q_{1},...,q_{m}\in (0,\infty ).$ In particular, we remark that in general
the condition $\frac{1}{q_{1}}+\cdots +\frac{1}{q_{k}}\leq \frac{k+1}{2}$ is
not enough to prove (I) for $q_{1},...,q_{m}\in (0,\infty ).$ For instance,
if $m=k=3$ and $\left( q_{1},q_{2},q_{3}\right) =\left( 1,\frac{18}{10}%
,3\right) $ we have $\frac{1}{q_{1}}+\frac{1}{q_{2}}+\frac{1}{q_{3}}<\frac{%
3+1}{2}$ but, as we shall see, (I) is not true. We prove the following:

\begin{theorem}
\label{999}Let $1\leq k\leq m$ and $n_{1},\ldots ,n_{k}\geq 1$ be positive
integers such that $n_{1}+\cdots +n_{k}=m$, let $q_{1},\dots ,q_{k}\in
\left( 0,\infty \right) $. The following assertions are equivalent:

(i) There is a constant $C_{k,q_{1}...q_{k}}^{\mathbb{K}}\geq 1$ such that%
{\small {\
\begin{equation*}
\left( {\sum\limits_{i_{1}=1}^{\infty }}\left( {\sum\limits_{i_{2}=1}^{%
\infty }}\left( ...\left( {\sum\limits_{i_{k-1}=1}^{\infty }}\left( {%
\sum\limits_{i_{k}=1}^{\infty }}\left\vert T\left( e_{i_{1}}^{n_{1}},\ldots
,e_{i_{k}}^{n_{k}}\right) \right\vert ^{q_{k}}\right) ^{\frac{q_{k-1}}{q_{k}}%
}\right) ^{\frac{q_{k-2}}{q_{k-1}}}\cdots \right) ^{\frac{q_{2}}{q_{3}}%
}\right) ^{\frac{q_{1}}{q_{2}}}\right) ^{\frac{1}{q_{1}}}\leq
C_{k,q_{1}...q_{k}}^{\mathbb{K}}\left\Vert T\right\Vert
\end{equation*}%
}}for all continuous $m$-linear forms $T:c_{0}\times \cdots \times
c_{0}\rightarrow \mathbb{K}$.

(ii) $\sum\limits_{j\in A}\frac{1}{q_{j}}\leq \frac{card(A)+1}{2}$ for all $%
A\subset \{1,...,k\}.$
\end{theorem}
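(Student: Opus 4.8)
The plan is to prove the two implications separately: I would treat (i) $\Rightarrow$ (ii) by testing the inequality against explicit multilinear forms, and (ii) $\Rightarrow$ (i) by reducing to Theorem \ref{809} through a truncation of the exponents combined with the monotonicity of the mixed norms.

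For the necessity (i) $\Rightarrow$ (ii), I would fix a subset $A\subseteq\{1,\ldots,k\}$ with $s=\mathrm{card}(A)$ and, for each $n$, construct an $m$-linear form $T_{n}$ whose associated array $a_{i_{1}\ldots i_{k}}=T_{n}(e_{i_{1}}^{n_{1}},\ldots,e_{i_{k}}^{n_{k}})$ is unimodular on the ``diagonal'' of the blocks indexed by $A$ and is frozen at the first coordinate on the remaining blocks. Concretely, I would take a Kahane--Salem--Zygmund type form $S_{n}$ acting only on the $s$ blocks of $A$, with $|S_{n}|\equiv 1$ on the diagonal and $\|S_{n}\|\lesssim n^{(s+1)/2}$ (the estimate underlying the necessity part of Theorem \ref{809} for $s$ blocks, cf. \cite{a,aa}), and multiply it by the product of the first coordinates of the slots belonging to the blocks not in $A$. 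Evaluating the left-hand side of (i), the single-term sums over the frozen indices collapse and the nested norm reduces to the nested $(q_{j})_{j\in A}$-norm of the all-unimodular array, which equals $n^{\sum_{j\in A}1/q_{j}}$; on the other hand $\|T_{n}\|\lesssim n^{(s+1)/2}$. Letting $n\to\infty$ forces $\sum_{j\in A}1/q_{j}\le (s+1)/2$, that is, (ii).

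For the sufficiency (ii) $\Rightarrow$ (i), I would first observe that the singleton instances of (ii) give $q_{j}\ge 1$ for every $j$, so that $\tilde{q}_{j}:=\min(q_{j},2)\in[1,2]$. The combinatorial content of (ii) is then equivalent to the single inequality $\sum_{j}1/\tilde{q}_{j}\le (k+1)/2$: writing $b_{j}=1/q_{j}-1/2$, condition (ii) reads $\sum_{j\in A}b_{j}\le 1/2$ for all $A$, whose worst case is attained at $A=\{j:q_{j}<2\}$, and this worst case is exactly $\sum_{j}1/\tilde{q}_{j}\le (k+1)/2$. Hence Theorem \ref{809} applies to the exponents $\tilde{q}_{1},\ldots,\tilde{q}_{k}\in[1,2]$ and yields (i) for $\tilde{q}$. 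Finally, since $\tilde{q}_{j}\le q_{j}$ and the entries $|T(\cdots)|$ are nonnegative, the layer-by-layer monotonicity of the iterated norms (a larger exponent gives a smaller norm) shows that the nested $q$-norm is dominated by the nested $\tilde{q}$-norm, which completes (i) for the original exponents.

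The main obstacle I expect is the necessity direction, specifically pinning down the estimate $\|S_{n}\|\lesssim n^{(s+1)/2}$ for the diagonal (repeated-block) form and verifying that freezing the complementary blocks genuinely collapses the corresponding levels of the mixed norm; both are in the spirit of the constructions in \cite{a,aa}, but the repeated-block structure requires one to use the correct number $s$ of free directions rather than the total degree $\sum_{j\in A}n_{j}$. The sufficiency direction, by contrast, is essentially the elementary reformulation of (ii) together with the monotonicity of mixed norms, and should present no difficulty.
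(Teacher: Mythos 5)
Your proposal is correct. On the sufficiency direction it coincides with the paper's proof: there, too, one notes that the singleton instances of (ii) force $q_{j}\geq 1$, that (ii) yields $\sum_{j=1}^{k}\frac{1}{\min \{q_{j},2\}}\leq \frac{k+1}{2}$ (your worst-case-subset computation with $b_{j}=1/q_{j}-1/2$ and $A=\{j:q_{j}<2\}$ is exactly the paper's elementary lemma, proved there by splitting the indices into $A_{1}=\{j:q_{j}\leq 2\}$ and $A_{2}=\{j:q_{j}>2\}$), and then (II)$\Rightarrow$(I) of Theorem \ref{809} applied to the truncated exponents together with the canonical inclusions $\ell_{\min\{q_{j},2\}}\subset \ell_{q_{j}}$ concludes. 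On the necessity direction you take a genuinely different route: the paper constructs no blocked example at all for Theorem \ref{999}; it first proves the unblocked statement (its Theorem \ref{geral}, where all blocks are singletons), whose necessity uses the ordinary $k$-linear Kahane--Salem--Zygmund form multiplied by the frozen factor $x_{1}^{(k+1)}\cdots x_{1}^{(m)}$, and it then deduces (i)$\Rightarrow$(ii) by quoting the transfer argument of \cite{aa} --- if (i) holds for all $m$-linear forms, then the corresponding unblocked inequality holds for all $k$-linear forms $T_{k}$ --- to which Theorem \ref{geral} applies. You instead build, for each $A$ with $\mathrm{card}(A)=s$, an $m$-linear form directly in the blocked setting: a KSZ-type form supported on the block diagonal of the $A$-blocks with $\Vert S_{n}\Vert \leq C n^{(s+1)/2}$, frozen at the first coordinate on the complementary blocks. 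This is sound, and the two cautions you raise are precisely the right ones: freezing must occur only on \emph{entire} blocks (freezing a single slot inside an $A$-block would introduce a factor $\delta_{i_{j},1}$ and destroy the unimodular diagonal), which is automatic here since $A$ selects whole blocks; and $S_{n}$ must be genuinely $(\sum_{j\in A}n_{j})$-linear with norm governed by the number $s$ of free directions rather than the total degree --- such a form (random signs on the block-diagonal support, with the norm estimate reducible to the $s$-linear KSZ bound via the substitution $w_{i}=y_{i}^{n_{j}}$ on the polydisc, plus polarization within blocks) is exactly what the optimality argument for Theorem \ref{809} in \cite{aa} supplies, so your citation carries the same weight as the paper's appeal to \cite{aa}. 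The trade-off: the paper's route is shorter, concentrating all KSZ work in the unblocked Theorem \ref{geral} and outsourcing the unblocking step, while yours is more self-contained at the blocked level and makes the role of the parameter $s$ explicit, at the cost of having to verify the blocked KSZ estimate rather than quoting the reduction.
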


\section{The proof}

\bigskip We begin by recalling the following theorem (in fact, as we
mentioned before, this is precisely Theorem \ref{809} with $k=m$):

\begin{theorem}[Albuquerque, Bayart, Pellegrino and Seoane]
\label{THMBHQ} Let $m\geq 1$, let $q_{1},...,q_{m}\in \lbrack 1,2].$ The
following assertions are equivalent:

(a) There is a constant $C_{q_{1}...q_{m}}^{\mathbb{K}}\geq 1$ such that%
{\small {\
\begin{equation*}
\left( {\sum\limits_{i_{1}=1}^{\infty }}\left( {\sum\limits_{i_{2}=1}^{%
\infty }}\left( ...\left( {\sum\limits_{i_{m-1}=1}^{\infty }}\left( {%
\sum\limits_{i_{m}=1}^{\infty }}\left\vert T\left(
e_{i_{1}},...,e_{i_{m}}\right) \right\vert ^{q_{m}}\right) ^{\frac{q_{m-1}}{%
q_{m}}}\right) ^{\frac{q_{m-2}}{q_{m-1}}}\cdots \right) ^{\frac{q_{2}}{q_{3}}%
}\right) ^{\frac{q_{1}}{q_{2}}}\right) ^{\frac{1}{q_{1}}}\leq C_{q_{1}\ldots
q_{m}}^{\mathbb{K}}\left\Vert T\right\Vert
\end{equation*}%
}}for all continuous $m$-linear forms $T:c_{0}\times \cdots \times
c_{0}\rightarrow \mathbb{K}$.

(b) $\frac{1}{q_{1}}+\cdots +\frac{1}{q_{m}}\leq \frac{m+1}{2}.$
\end{theorem}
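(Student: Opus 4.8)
The plan is to prove the two implications separately: (a)$\Rightarrow$(b) is a short optimality argument, while (b)$\Rightarrow$(a) carries the analytic content. For (a)$\Rightarrow$(b) I would test (a) against the extremal forms produced by the Kahane--Salem--Zygmund inequality. That inequality supplies, for each $n$, an $m$-linear form $A_{n}:c_{0}\times\cdots\times c_{0}\to\mathbb{K}$ supported on the first $n$ coordinates of each variable, with all coefficients of modulus $|A_{n}(e_{i_{1}},\ldots,e_{i_{m}})|=1$ and $\Vert A_{n}\Vert\leq\kappa_{m}\,n^{(m+1)/2}$ for a constant $\kappa_{m}$ depending only on $m$. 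The coefficient array is the all-ones array on $\{1,\ldots,n\}^{m}$, so it factors as a tensor product and its mixed $(\ell_{q_{1}},\ldots,\ell_{q_{m}})$-norm equals $n^{1/q_{1}+\cdots+1/q_{m}}$. Plugging this into (a) yields $n^{\sum_{j}1/q_{j}}\leq C_{q_{1}\ldots q_{m}}^{\mathbb{K}}\kappa_{m}\,n^{(m+1)/2}$ for all $n$, and letting $n\to\infty$ forces $\sum_{j}1/q_{j}\leq(m+1)/2$.

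For the converse (b)$\Rightarrow$(a) I would first reduce to the boundary case $\sum_{j}1/q_{j}=(m+1)/2$. The mixed norm in (a) is non-increasing in each exponent $q_{j}$ separately, so if $q_{j}'\leq q_{j}$ for all $j$ then $\Vert a\Vert_{\mathbf{q}}\leq\Vert a\Vert_{\mathbf{q}'}$; since the largest value of $\sum_{j}1/q_{j}'$ attainable with $q_{j}'\in[1,2]$ is $m\geq(m+1)/2$, I can raise the reciprocals (decrease the $q_{j}'$) until $\sum_{j}1/q_{j}'=(m+1)/2$ while keeping $q_{j}'\in[1,2]$. Thus it suffices to treat a point on the boundary. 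In reciprocal coordinates $x_{j}=1/q_{j}\in[1/2,1]$, the set $\{x\in[1/2,1]^{m}:\sum_{j}x_{j}=(m+1)/2\}$ is an $(m-1)$-simplex whose vertices are precisely the $m$ permutations of $(1,\tfrac12,\ldots,\tfrac12)$; vertex $j$ corresponds to the exponent vector $\mathbf{q}^{(j)}$ that equals $1$ in slot $j$ and $2$ in every other slot.

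The endpoint inequality at each vertex $\mathbf{q}^{(j)}$ is the anisotropic (mixed) Littlewood inequality, with the $\ell_{1}$-summation in position $j$ and $\ell_{2}$ in the remaining positions. I would establish it by applying the Khinchine inequality in the $m-1$ variables other than the $j$-th and then reordering the summations by Minkowski's inequality, obtaining $\Vert a\Vert_{\mathbf{q}^{(j)}}\leq D_{m}\Vert T\Vert$ with $D_{m}$ depending only on $m$. When $j=m$ the $\ell_{1}$-layer is innermost and this is immediate; for an inner position one must push the $\ell_{1}$-sum through the surrounding $\ell_{2}$-layers, which is exactly where Minkowski's inequality is needed.

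Finally I would interpolate. Writing the boundary point as a convex combination $(1/q_{1},\ldots,1/q_{m})=\sum_{j}\theta_{j}\mathbf{x}^{(j)}$ of the $m$ vertices, the multiplicative interpolation of mixed norms --- the statement that $\Vert a\Vert_{\mathbf{q}}\leq\prod_{j}\Vert a\Vert_{\mathbf{q}^{(j)}}^{\theta_{j}}$ whenever $1/q_{i}=\sum_{j}\theta_{j}/q_{i}^{(j)}$ --- combines the vertex bounds into $\Vert a\Vert_{\mathbf{q}}\leq\prod_{j}(D_{m}\Vert T\Vert)^{\theta_{j}}=D_{m}\Vert T\Vert$, which is (a). The main obstacle is this interpolation step: one must prove that the nested mixed $\ell_{p}$-norms interpolate multiplicatively along convex combinations of the reciprocal exponents, which is carried out by iterating the generalized Hölder inequality through the summations from the innermost index outward and checking that the resulting exponents match $\sum_{j}\theta_{j}/q_{i}^{(j)}$ at every layer.
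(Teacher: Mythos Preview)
The paper does not prove this theorem: it is recalled as a known result from \cite{a} (and identified as the special case $k=m$ of Theorem~\ref{809}, itself quoted from \cite{aa}), so there is no proof in the present paper to compare your attempt against. Your sketch is nonetheless correct and is essentially the argument of \cite{a}: Kahane--Salem--Zygmund forms for (a)$\Rightarrow$(b) --- the same device the paper uses in its proof of Theorem~\ref{geral} --- and, for (b)$\Rightarrow$(a), reduction to the boundary hyperplane by monotonicity of mixed $\ell_{p}$-norms, the $m$ vertex inequalities at the permutations of $(1,2,\ldots,2)$ via Khinchine plus Minkowski, and mixed-norm H\"older interpolation across the simplex. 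The interpolation step you flag as the main obstacle is indeed the key technical lemma in \cite{a}, and your description of its proof (iterated H\"older from the innermost index outward) is accurate.
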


\bigskip Our first step is to extend the Theorem \ref{THMBHQ} as follows:

\begin{theorem}
\label{geral} Let $m\geq 1$, let $q_{1},...,q_{m}\in (0,\infty ).$ The
following assertions are equivalent:

(A) There is a constant $C_{q_{1}...q_{m}}^{\mathbb{K}}\geq 1$ such that%
{\small {\
\begin{equation*}
\left( {\sum\limits_{i_{1}=1}^{\infty }}\left( {\sum\limits_{i_{2}=1}^{%
\infty }}\left( ...\left( {\sum\limits_{i_{m-1}=1}^{\infty }}\left( {%
\sum\limits_{i_{m}=1}^{\infty }}\left\vert T\left(
e_{i_{1}},...,e_{i_{m}}\right) \right\vert ^{q_{m}}\right) ^{\frac{q_{m-1}}{%
q_{m}}}\right) ^{\frac{q_{m-2}}{q_{m-1}}}\cdots \right) ^{\frac{q_{2}}{q_{3}}%
}\right) ^{\frac{q_{1}}{q_{2}}}\right) ^{\frac{1}{q_{1}}}\leq C_{q_{1}\ldots
q_{m}}^{\mathbb{K}}\left\Vert T\right\Vert
\end{equation*}%
}}for all continuous $m$-linear forms $T:c_{0}\times \cdots \times
c_{0}\rightarrow \mathbb{K}$.

(B) $\sum\limits_{j\in A}\frac{1}{q_{j}}\leq \frac{card(A)+1}{2}$ for all $%
A\subset \{1,...,m\}.$
\end{theorem}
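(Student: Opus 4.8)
The plan is to derive Theorem~\ref{geral} from the known characterization in Theorem~\ref{THMBHQ}, whose exponents are restricted to $[1,2]$, by a reduction argument. The essential observation is that the left-hand side of (A) is a mixed $\ell^{q}$-norm of the scalar array $a_{i_{1}\dots i_{m}} = T(e_{i_{1}},\dots,e_{i_{m}})$, and that such mixed norms behave monotonically and homogeneously under changes of the exponents. So I would separate the two implications and treat the ``bad'' exponents (those outside $[1,2]$) by comparison with a surrogate choice of exponents lying in $[1,2]$.

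\medskip

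\noindent\textbf{(B) $\Rightarrow$ (A).} First I would show that condition (B) forces every $q_{j}$ to satisfy $q_{j}\geq 1$: taking $A=\{j\}$ gives $\frac{1}{q_{j}}\leq 1$. Thus the only way an exponent can leave the interval $[1,2]$ is from above, i.e.\ some $q_{j}>2$. For those indices I would replace $q_{j}$ by $\tilde q_{j}:=\min\{q_{j},2\}\in[1,2]$, leaving the others unchanged. Because the inner sums are over $c_{0}$-coordinates and the array is finite-norm, increasing an exponent decreases the corresponding $\ell^{q}$-quasinorm (the standard inclusion $\|\cdot\|_{\ell^{p}}\leq\|\cdot\|_{\ell^{q}}$ for $p\geq q$ applied coordinate-by-coordinate in the iterated sum); hence the mixed norm with exponents $(q_{1},\dots,q_{m})$ is dominated by the one with exponents $(\tilde q_{1},\dots,\tilde q_{m})$. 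It then suffices to verify that the truncated exponents still satisfy the hypothesis of Theorem~\ref{THMBHQ}, namely $\sum_{j}\frac{1}{\tilde q_{j}}\leq\frac{m+1}{2}$. Here is where condition (B) in its full set-indexed strength is used rather than the single inequality $\sum_{j}\frac{1}{q_{j}}\leq\frac{m+1}{2}$: applying (B) to the set $A=\{j:q_{j}\leq 2\}$ (the indices where $\tilde q_{j}=q_{j}$) controls exactly the ``large'' contributions $\frac{1}{\tilde q_{j}}=\frac{1}{q_{j}}$, while each truncated index contributes $\frac{1}{\tilde q_{j}}=\frac12$. A short arithmetic check then shows $\sum_{j}\frac{1}{\tilde q_{j}}\leq\frac{m+1}{2}$, and Theorem~\ref{THMBHQ} yields (A) with the same constant.

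\medskip

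\noindent\textbf{(A) $\Rightarrow$ (B).} For the converse I would argue contrapositively: assume (B) fails, so there is some $A\subset\{1,\dots,m\}$ with $\sum_{j\in A}\frac{1}{q_{j}}>\frac{\mathrm{card}(A)+1}{2}$, and produce a family of $m$-linear forms violating (A). The natural device is to test on forms that depend only on the variables indexed by $A$ and are constant (or replicated) in the remaining variables, thereby collapsing the problem to an $\mathrm{card}(A)$-linear inequality. Summing trivially over the inactive indices contributes only harmless finite factors, so the mixed-norm inequality (A) would force a Bohnenblust--Hille-type estimate for $\mathrm{card}(A)$-linear forms with exponents $(q_{j})_{j\in A}$; by the ``only if'' direction of Theorem~\ref{THMBHQ} (applied in dimension $\mathrm{card}(A)$, after again truncating any $q_{j}>2$ down to $2$ in a way that can only make the violated inequality easier to break) this is impossible. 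This is precisely the mechanism illustrated by the explicit example $(q_{1},q_{2},q_{3})=(1,\tfrac{18}{10},3)$ in the introduction, where the single global inequality holds yet the subset $A=\{1,2\}$ violates the local constraint.

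\medskip

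\noindent I expect the main obstacle to lie in the $(A)\Rightarrow(B)$ direction, specifically in making the ``collapse to the sub-indices in $A$'' rigorous: one must check that restricting to forms that ignore the variables outside $A$ genuinely isolates the sub-mixed-norm with exponents $(q_{j})_{j\in A}$, and that the exponents outside $A$ play no role in the resulting constraint. The $(B)\Rightarrow(A)$ direction is essentially bookkeeping via monotonicity of $\ell^{q}$-norms plus one application of Theorem~\ref{THMBHQ}, so the delicate point there is only the arithmetic verification that the set-indexed hypothesis (B) survives truncation, which the single inequality $\sum_{j}\frac{1}{q_{j}}\leq\frac{m+1}{2}$ alone would not guarantee.
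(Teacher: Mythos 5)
Your (B)$\Rightarrow$(A) half is essentially the paper's own argument: singleton sets $A=\{j\}$ force $q_j\geq 1$, the truncation $\tilde q_j=\min\{q_j,2\}$ combined with the canonical inclusion between $\ell_p$ norms dominates the mixed norm, and your arithmetic with the set $\{j:q_j\leq 2\}$ is verbatim the paper's elementary lemma. That direction is correct as proposed.

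The genuine gap is in (A)$\Rightarrow$(B), in the step ``truncate any $q_j>2$ down to $2$ and invoke the necessity direction of Theorem~\ref{THMBHQ}.'' Two problems. First, the truncation goes the wrong way analytically: lowering an exponent from $q_j>2$ to $2$ \emph{increases} the corresponding iterated sum (the inclusion gives $\|\cdot\|_{\ell_{q_j}}\leq\|\cdot\|_{\ell_2}$, not the reverse), so hypothesis (A) with the original exponents does \emph{not} imply that the collapsed $\mathrm{card}(A)$-linear inequality holds with the truncated exponents. You have correctly observed that truncation preserves the arithmetic violation (since $1/q_j<1/2=1/\tilde q_j$), but you have thereby lost the very inequality you intend to contradict; the parenthetical ``can only make the violated inequality easier to break'' conflates the arithmetic side with the analytic side. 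Second, Theorem~\ref{THMBHQ} is stated only for $q_1,\dots,q_m\in[1,2]$, so its (a)$\Rightarrow$(b) direction says nothing when some $q_j<1$ --- and subsets containing such an index are precisely among the potential violators of (B); a separate direct argument is needed there (e.g., a form depending on one variable through the functional with coefficients $a_i=1/n$, $i\leq n$, for which $\|a\|_{\ell_{q_j}}/\|a\|_{\ell_1}=n^{1/q_j-1}\to\infty$). Your scheme can be repaired: instead of truncating, \emph{delete} from the violating set $A$ every index with $q_j>2$ (since $1/q_j<1/2$, the violation $\sum_{j\in A}1/q_j>(\mathrm{card}(A)+1)/2$ persists for the smaller set), dispose of $q_j<1$ by the direct example, and only then apply Theorem~\ref{THMBHQ} in dimension $\mathrm{card}(A)$. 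The paper avoids all of this case analysis by testing (A) directly on a Kahane--Salem--Zygmund $k$-linear form extended by the factor $x_1^{(k+1)}\cdots x_1^{(m)}$: because the KSZ coefficients are unimodular, the mixed norm equals exactly $n^{\sum_{j\in A}1/q_j}$ for \emph{arbitrary} positive exponents, and comparison with $\|T_m\|\leq Cn^{(\mathrm{card}(A)+1)/2}$ as $n\to\infty$ yields (B) at once. Your collapse-to-$A$ device is the same as the paper's; it is the black-box appeal to Theorem~\ref{THMBHQ}'s necessity, as you set it up, that does not go through.
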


To prove Theorem \ref{geral} we need to prove the following elementary lemma:

\begin{lemma}
If $q_{1},...,q_{m}\in (0,\infty )$ and
\begin{equation}
\sum\limits_{j\in A}\frac{1}{q_{j}}\leq \frac{card(A)+1}{2}  \label{77}
\end{equation}%
for all $A\subset \{1,...,m\}$, then
\begin{equation*}
\frac{1}{\min \{q_{1},2\}}+\cdots +\frac{1}{\min \{q_{m},2\}}\leq \frac{m+1}{%
2}.
\end{equation*}
\end{lemma}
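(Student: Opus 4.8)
The plan is to reduce the claimed estimate to a single application of the hypothesis \eqref{77}, with the set $A$ chosen to be precisely the collection of indices for which the minimum is attained by $q_j$ rather than by $2$. Concretely, I would set
\[
A=\{\,j\in\{1,\dots,m\}:q_j\le 2\,\},\qquad b=card(A).
\]
For $j\in A$ we have $\min\{q_j,2\}=q_j$, so $1/\min\{q_j,2\}=1/q_j$; for $j\notin A$ we have $q_j>2$, so $\min\{q_j,2\}=2$ and $1/\min\{q_j,2\}=1/2$.

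Splitting the left-hand sum according to this dichotomy gives
\[
\sum_{j=1}^m\frac{1}{\min\{q_j,2\}}=\sum_{j\in A}\frac{1}{q_j}+\frac{m-b}{2}.
\]
I would then apply \eqref{77} to this particular $A$, bounding the first term by $(b+1)/2$, and conclude
\[
\sum_{j=1}^m\frac{1}{\min\{q_j,2\}}\le\frac{b+1}{2}+\frac{m-b}{2}=\frac{m+1}{2}.
\]
The degenerate case $A=\emptyset$ (all $q_j>2$) is automatic, since then the first sum vanishes and the estimate reads $m/2\le(m+1)/2$.

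Every step here is a direct computation, so I expect no genuine technical obstacle; the only point requiring insight is the choice of the test set $A$. The hypothesis \eqref{77} is available for \emph{every} subset of $\{1,\dots,m\}$, and the content of the lemma is simply that, among all these inequalities, the one indexed by the set of ``small'' exponents $\{\,j:q_j\le 2\,\}$ is exactly the one that matches the truncated sum: the cardinality term $(b+1)/2$ supplied by the hypothesis combines with the complementary count $(m-b)/2$ coming from the indices where the truncation at $2$ is active to give precisely $(m+1)/2$.
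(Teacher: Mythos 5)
Your proof is correct and is essentially identical to the paper's own argument: both split the indices into $\{j : q_j \le 2\}$ and its complement, apply the hypothesis \eqref{77} to the former, and bound each remaining term by $\frac{1}{2}$. Your explicit treatment of the case $A=\emptyset$ is a small, harmless addition that the paper leaves implicit.
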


\begin{proof}
Given $q_{1},...,q_{m}\in (0,\infty )$, consider $A_{1}=\{j:q_{j}\leq 2\}$
and $A_{2}=\{j:q_{j}>2\}$. Then, by (\ref{77}), we have%
\begin{equation*}
\frac{1}{\min \{q_{1},2\}}+\cdots +\frac{1}{\min \{q_{m},2\}}%
=\sum\limits_{j\in A_{1}}\frac{1}{q_{j}}+\sum\limits_{j\in A_{2}}\frac{1}{2}%
\leq \frac{card(A_{1})+1}{2}+\frac{card(A_{2})}{2}=\frac{m+1}{2}.
\end{equation*}
\end{proof}

\bigskip Now we can prove Theorem \ref{geral}. Let us begin by proving that
(A) implies (B). The proof of the case $A=\{1,...,k\}$ for $k<m$ illustrates
the argument. Let
\begin{equation*}
T_{k}:c_{0}\overset{k\text{ times}}{\times \cdots \times }c_{0}\rightarrow
\mathbb{K}
\end{equation*}%
be the $k$-linear form given by the Kahane--Salem--Zygmund inequality (see
\cite[Lemma 6.1]{a}). Then, define%
\begin{equation*}
T_{m}:c_{0}\overset{m\text{ times}}{\times \cdots \times }c_{0}\rightarrow
\mathbb{K}
\end{equation*}%
by%
\begin{equation*}
T_{m}(x^{(1)},...,x^{(m)})=T_{k}(x^{(1)},...,x^{(k)})x_{1}^{(k+1)}...x_{1}^{(m)}.
\end{equation*}%
It is obvious that $\left\Vert T_{m}\right\Vert =\left\Vert T_{k}\right\Vert
$ and by the Kahane--Salem--Zygmund inequality we have $\left\Vert
T_{k}\right\Vert \leq Cn^{\frac{k+1}{2}}$ for some $C>0$.\ Since{\small {\
\begin{eqnarray*}
n^{\frac{1}{q_{1}}+\cdots +\frac{1}{q_{k}}} &=&\left( {\sum%
\limits_{i_{1}=1}^{\infty }}\left( {\sum\limits_{i_{2}=1}^{\infty }}\left(
...\left( {\sum\limits_{i_{k-1}=1}^{\infty }}\left( {\sum\limits_{i_{k}=1}^{%
\infty }}\left\vert T_{k}\left( e_{i_{1}},...,e_{i_{k}}\right) \right\vert
^{q_{k}}\right) ^{\frac{q_{k-1}}{q_{k}}}\right) ^{\frac{q_{k-2}}{q_{k-1}}%
}\cdots \right) ^{\frac{q_{2}}{q_{3}}}\right) ^{\frac{q_{1}}{q_{2}}}\right)
^{\frac{1}{q_{1}}} \\
&=&\left( {\sum\limits_{i_{1}=1}^{\infty }}\left( {\sum\limits_{i_{2}=1}^{%
\infty }}\left( ...\left( {\sum\limits_{i_{m-1}=1}^{\infty }}\left( {%
\sum\limits_{i_{m}=1}^{\infty }}\left\vert T_{m}\left(
e_{i_{1}},...,e_{i_{m}}\right) \right\vert ^{q_{m}}\right) ^{\frac{q_{m-1}}{%
q_{m}}}\right) ^{\frac{q_{m-2}}{q_{m-1}}}\cdots \right) ^{\frac{q_{2}}{q_{3}}%
}\right) ^{\frac{q_{1}}{q_{2}}}\right) ^{\frac{1}{q_{1}}} \\
&\leq &C_{q_{1}\ldots q_{m}}^{\mathbb{K}}\left\Vert T_{m}\right\Vert  \\
&\leq &C_{q_{1}\ldots q_{m}}^{\mathbb{K}}Cn^{\frac{k+1}{2}},
\end{eqnarray*}
}}since $n$ is arbitrary, we conclude that
\begin{equation*}
\sum\limits_{j\in A}\frac{1}{q_{j}}\leq \frac{card(A)+1}{2}.
\end{equation*}

To prove that (B) implies (A), suppose that $\sum\limits_{j\in A}\frac{1}{%
q_{j}}\leq \frac{card(A)+1}{2}$ for all $A\subset \{1,...,m\};$ from the
lemma proved above we have%
\begin{equation*}
\frac{1}{\min \{q_{1},2\}}+\cdots +\frac{1}{\min \{q_{m},2\}}\leq \frac{m+1}{%
2}.
\end{equation*}%
Besides, by considering $A=\{k\}$ it is plain that $q_{k}\geq 1.$ Hence $%
\min \{q_{k},2\}\in \lbrack 1,2]$ for all $k=1,...,m$ and the proof of
Theorem \ref{geral} is now a consequence of (b)$\Rightarrow $(a) and of the
canonical inclusion between the norms of $\ell _{p}$ spaces.

Now we can prove Theorem \ref{999}. Let us begin by proving that (i) implies
(ii). From the proof of the main result of \cite{aa} we know that since (i)
holds for all continuous $m$-linear forms $T_{m}:c_{0}\times \cdots \times
c_{0}\rightarrow \mathbb{K}$, then{\small {\
\begin{equation*}
\left( {\sum\limits_{i_{1}=1}^{\infty }}\left( {\sum\limits_{i_{2}=1}^{%
\infty }}\left( ...\left( {\sum\limits_{i_{k-1}=1}^{\infty }}\left( {%
\sum\limits_{i_{k}=1}^{\infty }}\left\vert T_{k}\left(
e_{i_{1}},...,e_{i_{k}}\right) \right\vert ^{q_{k}}\right) ^{\frac{q_{k-1}}{%
q_{k}}}\right) ^{\frac{q_{k-2}}{q_{k-1}}}\cdots \right) ^{\frac{q_{2}}{q_{3}}%
}\right) ^{\frac{q_{1}}{q_{2}}}\right) ^{\frac{1}{q_{1}}}\leq
C_{k,q_{1}...q_{k}}^{\mathbb{K}}\left\Vert T_{k}\right\Vert
\end{equation*}%
}}holds for all $k$-linear forms $T_{k}:c_{0}\times \cdots \times
c_{0}\rightarrow \mathbb{K}$. $\ $So, from Theorem \ref{geral} we conclude
that%
\begin{equation*}
\sum\limits_{j\in A}\frac{1}{q_{j}}\leq \frac{card(A)+1}{2}
\end{equation*}%
for all $A\subset \{1,...,k\}.$

Now we prove that (ii) implies (i). If $\sum\limits_{j\in A}\frac{1}{q_{j}}%
\leq \frac{card(A)+1}{2}$ for all $A\subset \{1,...,k\}$, from the lemma we
have%
\begin{equation*}
\frac{1}{\min \{q_{1},2\}}+\cdots +\frac{1}{\min \{q_{k},2\}}\leq \frac{k+1}{%
2}.
\end{equation*}%
Since $\min \{q_{j},2\}\in \lbrack 1,2]$ for all $j=1,...,k,$ the proof is
now a consequence of (II)$\Rightarrow $(I) and of the canonical inclusion
between the norms of $\ell _{p}$ spaces.

\bigskip

\end{document}